\newtheorem{theorem}{Theorem}[section]
\newtheorem{proposition}[theorem]{Proposition}
\newtheorem{corollary}[theorem]{Corollary}
\title{Obstructions to Shake Sliceness for Links}
\author{Anthony Bosman}
\date{}
\address{Department of Mathematics, Andrews University, 4260 Administration Dr., Berrien Springs, MI 49104}
\email{bosman@andrews.edu}
\begin{document}
\maketitle

\begin{abstract}
    Shake slice generalizes the notion of a slice link, naturally extending the notion of shake slice knots to links. There is also a relative version, shake concordance, that generalizes link concordance. We show that if two links are shake concordant, then their zero surgery manifolds are homology cobordant. Then we give several obstructions to a link being shake slice; for instance, the Arf invariants vanish for both the link and each component. Finally we show that a shake slice link bounds disjoint disks in a homology 4-ball and hence each component is algebraically slice.
\end{abstract}

\section{Introduction}

    It is well-known that concordance of links implies their associated zero surgery manifolds are homology cobordant. Of interest is when the zero surgery manifolds are homology cobordant but the links are are not concordant. For instance, Cochran, Franklin, Hedden, and Horn \cite{cochran13} exhibit non-concordant, topologically slice knots with homology cobordant zero surgery manifolds. Moreover, Cha and Powell \cite{chapowell14} provide an infinite family of links with unknotted components that all have identical Milnor invariants and homeomorphic zero surgery manifolds with homotopy class of meridians preserved, but none of which are pairwise concordant.
    
    The notion of concordance has been extended to the more general notion of shake concordance for knots \cite{cochranray16} and links \cite{bosman20}. For links, there is also a more restricted notion, still more general than concordance, called strong shake concordance. We show that shake concordance of links (and hence also strong shake concordance of links) implies homology cobordism.
    
    \begin{proposition}
     Suppose $m$-component links $L$ and $L'$ are shake concordant. Then the zero surgery manifolds $M_L$ and $M_L'$ are homology cobordant.
     \label{homology_cobordant}
     \end{proposition}
     
      Families of links that are shake concordant but not concordant offer further examples of non-concordant links with homology cobordant zero surgery manifolds.
    
    It follows from the homology cobordism that if individual link components are shake concordant to each other, then they share the same algebraic concordance class, but this only holds in the case of strong shake concordance. It turns out that nothing can be said about the relationship of individual components in the more general setting of shake concordance of links, for given any two knots $K$ and $J$, there exist 2-component shake concordant links that have $K$ and $J$ as their respective first component and an unknot as their second component as in Figure \ref{fig:knots_with_meridians} (see Proposition 3.1 of \cite{bosman20}).
    
    \begin{figure}
        \centering
        \includegraphics[height=1.2in]{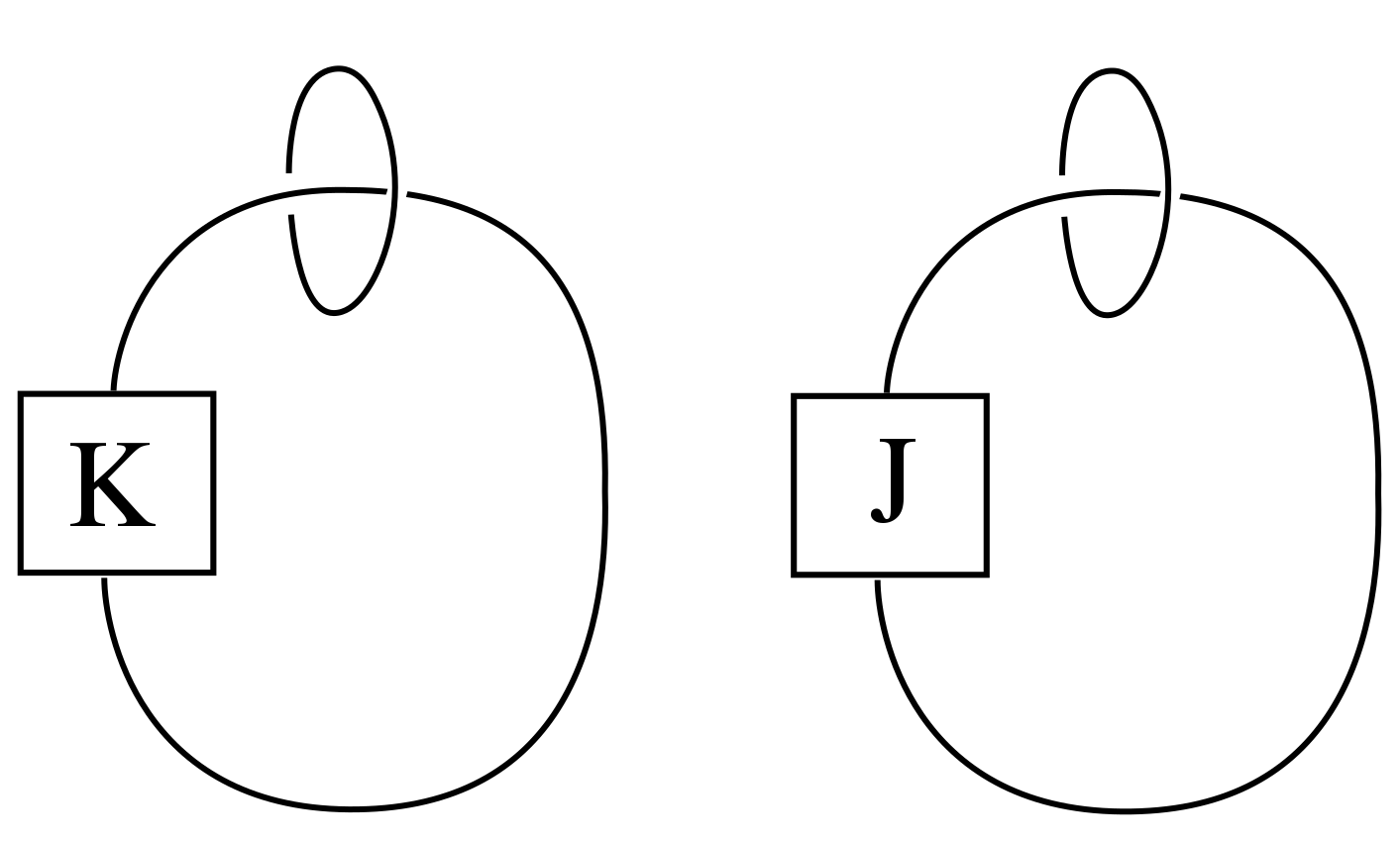}
        \caption{Shake concordant links}
        \label{fig:knots_with_meridians}
    \end{figure}
    
     Nevertheless, we can give a number of restrictions to a link being shake concordant to the trivial link, also called shake slice. For instance, we show:
    
    \begin{theorem}
		If a link $L=L_1\sqcup L_2\sqcup ...\sqcup L_m$  is shake slice, then:
	\begin{itemize}
		\item $Arf(L)=0$,
		\item $Arf(L_i)=0$ for $i=1,...,m$, and
		\item $Arf(L_i \sqcup L_j)=0$ for $i\neq j$.
	\end{itemize}
	\end{theorem}
	
	In fact, we show that if a link is shake slice, then it is homologically slice:
    
    \begin{proposition}
    \label{homologically_slice}
    Suppose the $m$-component link $L$ is shake slice. Then $L$ bounds $m$ disjoint disks in a homology 4-ball. That is, $L$ is homologically slice.
    \end{proposition}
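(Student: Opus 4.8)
The plan is to combine Proposition~\ref{homology_cobordant} with a surgery argument. Since $L$ is shake slice it is, by definition, shake concordant to the $m$-component unlink $U$, so by Proposition~\ref{homology_cobordant} the zero surgery manifold $M_L$ is homology cobordant to $M_U\cong\#^m(S^1\times S^2)$; fix such a homology cobordism $V$. The manifold $M_U$ bounds $N:=\natural^m(S^1\times D^3)$, so I would form $W:=V\cup_{M_U}N$, a compact $4$-manifold with $\partial W=M_L$. A Mayer--Vietoris computation, using that $M_U\hookrightarrow V$ is a homology isomorphism and that $H_*(N;\mathbb{Z})\cong H_*(\bigvee^m S^1;\mathbb{Z})$, shows that $H_*(W;\mathbb{Z})\cong H_*(\natural^m(S^1\times D^3);\mathbb{Z})$; in particular $H_1(W;\mathbb{Z})\cong\mathbb{Z}^m$ and $H_k(W;\mathbb{Z})=0$ for $k\ge 2$, and the same computation shows the inclusion $V\hookrightarrow W$ induces an isomorphism on $H_1$.

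Next I would locate the meridians inside $W$. The meridians $\mu_1,\dots,\mu_m$ of $L$ lie in $M_L=\partial W$ and generate $H_1(M_L;\mathbb{Z})$. Since $M_L$ is homology cobordant to $M_U$ we have $H_1(M_L;\mathbb{Z})\cong\mathbb{Z}^m$ (so in fact the pairwise linking numbers of $L$ all vanish), and $m$ generators of a free abelian group of rank $m$ form a basis. Because $V$ is a homology cobordism, $M_L\hookrightarrow V$ induces an isomorphism on $H_1$, and composing with $V\hookrightarrow W$ shows that $[\mu_1],\dots,[\mu_m]$ is a basis of $H_1(W;\mathbb{Z})\cong\mathbb{Z}^m$.

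Finally I would undo the zero surgery. Attach $2$-handles $h_1,\dots,h_m$ to $W$ along the framed curves $\mu_i\subset\partial W$, using for each $\mu_i$ the framing for which surgery on $M_L$ along $\mu_1,\dots,\mu_m$ returns $S^3$ (the standard dual-surgery framing), and call the result $B$. Then $\partial B=S^3$. From the long exact sequence of the pair $(B,W)$, together with the facts that $H_*(W;\mathbb{Z})$ is concentrated in degrees $0$ and $1$ and that the attaching classes $[\mu_i]$ form a basis of $H_1(W;\mathbb{Z})$, one reads off that $H_*(B;\mathbb{Z})\cong H_*(D^4;\mathbb{Z})$, so $B$ is a homology $4$-ball. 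Moreover the belt circle of $h_i$ is the dual curve of the surgery on $\mu_i$, namely $L_i\subset S^3=\partial B$, so the cocores of $h_1,\dots,h_m$ are $m$ disjoint, embedded disks in $B$ with boundary $L_1\sqcup\cdots\sqcup L_m=L$. This exhibits $L$ as homologically slice.

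I expect the only real subtlety to be the bookkeeping in the last two steps: checking that the meridians genuinely give a basis of $H_1(W;\mathbb{Z})$, so that the handle attachments kill all of the homology rather than creating new classes, and pinning down the framings so that $\partial B=S^3$ with the $L_i$ recovered as the dual curves. The Mayer--Vietoris and long-exact-sequence computations themselves are routine, and no control on $\pi_1(W)$ is required, since the statement only asks for a homology $4$-ball.
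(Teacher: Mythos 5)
Your proposal is correct and follows essentially the same route as the paper: invoke Proposition~\ref{homology_cobordant} to get a homology cobordism from $M_L$ to $\#^m(S^1\times S^2)$, cap the latter with $\natural^m(S^1\times D^3)$, attach $2$-handles along the meridians of $L$ to produce a homology $4$-ball with boundary $S^3$, and take the cocores as the slicing disks. Your version is a bit more explicit about why the meridians give a basis of $H_1$ and about the framing needed to recover $S^3$ (the paper's phrase ``$\partial W''=0$'' appears to be a slip for $\partial W''=S^3$), but the argument is the same.
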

    
    When Akbulut introduced the notion of shake slice knots in \cite{akbulut77}, he raised the still open question of if there exist slice knots that are not shake slice. The above proposition highlights the difficulty of resolving the generalized question for links as most invariants are known to fail to distinguish between slice and homologically slice.

\section{Definitions and Background}
    A $2n+1$ $r$-shaking of a knot $K$ is formed by taking $2n+1$ $r$-framed parallel copies of $K$ such that $n+1$ have the same orientation as $K$ and $n$ have opposite orientation. A knot $K\subset S^3=\partial B^4$ is called $r$-shake slice if we can form a 4-manifold $W_K^r$ by attaching a 2-handle, $D^2\times D^2$, to $B^4$ along $K$ with framing $r$ such that there exists an embedded sphere representing the generator of $H_2(W_K^r)\cong \mathbb{Z}$. Equivalently, $K\subset \partial B^4=S^3$ is $r$-shake slice if there exists a $2n+1$ $r$-shaking of $K$ that bounds a genus 0, connected, smooth surface in $B^4$ for some $n\geq0$. Note all slice knots are $r$-shake slice for any $r$. For $r\neq0$ there exists $r$-shake slice knots that are not slice (see \cite{akbulut77}, \cite{lickorish79}, \cite{cochranray16}); this remains open for $r=0$.
    
    We will restrict our attention to framing $r=0$.
    
    In \cite{bosman20} we extended the notion of shake slice to links: given an $m$-component link $L$, we can form a 4-manifold $W_L$ by attaching $m$ 2-handles to $B^4$ along the components of $L$ with framing $0$. We then say the link $L$ is shake slice if there exists embedded spheres representing the generators $(1,0,...,0), ...,(0,0,...,1)$ of $H_2(W_L)\cong \mathbb{Z}^m$. We call the link strongly shake slice if each sphere representing the $i^{th}$ generator only intersects the $i^{th}$ 2-handle, for $i=1,...,m$. See Figure \ref{fig:shake_slice_spheres}.
    
    \begin{figure}[!tbp]
        \centering
        \begin{minipage}[b]{0.45\textwidth}
        \includegraphics[width=\textwidth]{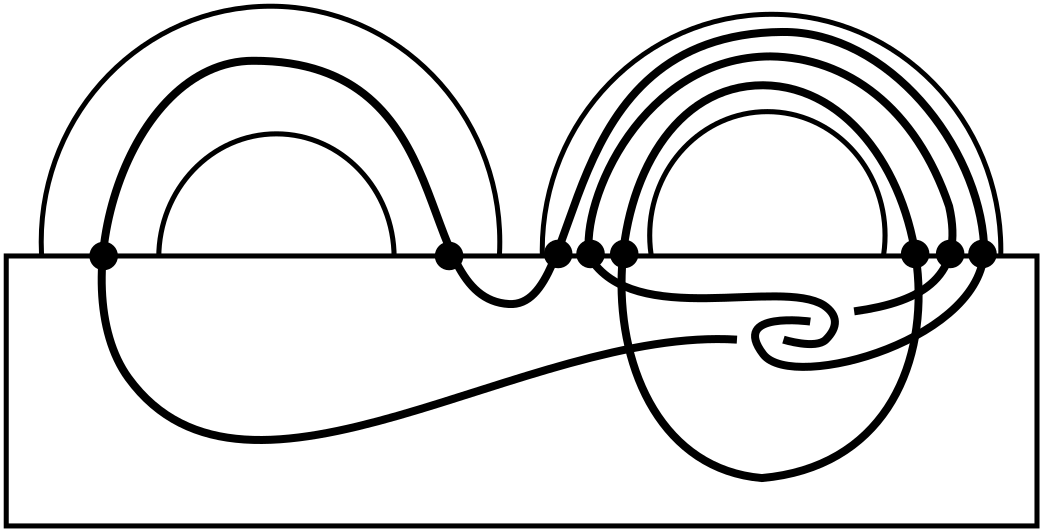}
        
        \end{minipage}
        \hfill
        \begin{minipage}[b]{0.45\textwidth}
        \includegraphics[width=\textwidth]{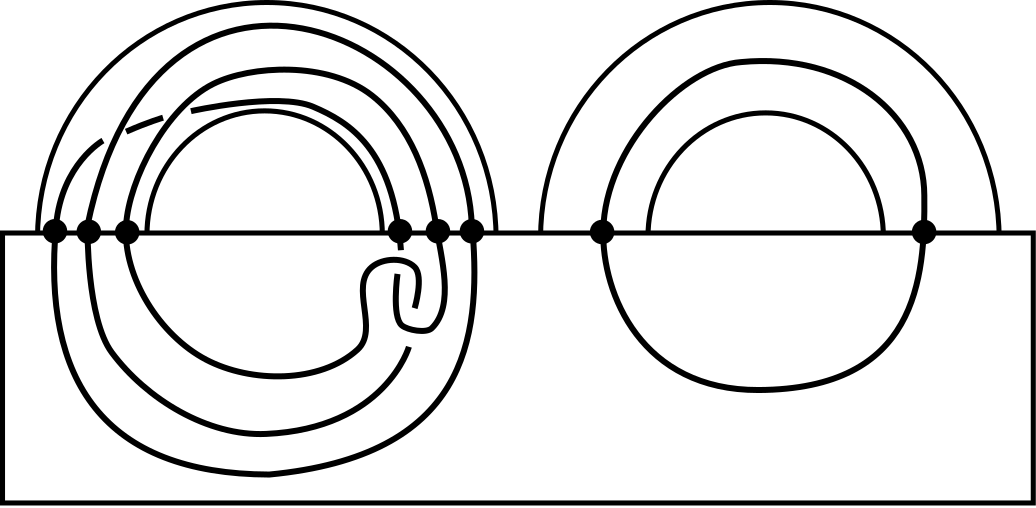}
        \end{minipage}
        \label{fig:shake_slice_spheres}
        \caption{A shake slice (left) and strong shake slice (right) link.}
    \end{figure}
    
    We can also give a geometric formulation: a $(2n_1+1,...,2n_m+1)$ shaking of a $m$-component link $L$ is formed by taking a $2n_k+1$ shaking of each component $L_k$ of $L$. Then we say a link $L$ is shake slice if there exists disjoint, smooth, properly embedded, compact, genus zero surfaces $\Sigma_1,...,\Sigma_m$ in $B^4$ that bound a $(2n_1+1,...,2n_m+1)$-shaking of $L$ as follows:
    \begin{itemize}
        \item each $\Sigma_i$ bounds $2n_{ii}+1$ 0-framed parallel copies of $L_i$ (exactly $n_{ii}+1$ of which the same orientation as $L_i$),
        \item $2n_{ij}$ 0-framed parallel copies of $L_j$ for each $j\neq i$ (exactly $n_{ij}$ of which the same orientation as $L_j$),
        \item $\sum_{j=1}^m n_{ij}=n_i$ for each $i=1,...,m$.
    \end{itemize}
    
    A link is then strongly shake slice exactly when $n_{ij}=0$ for all $i\neq j$.
    
    Slice links are shake slice, for the slice disks together with the core of the 2-handles form the desired spheres.
    
    There is also a relative version: We say $m$-component links $L$ and $L'$ are $(2n_1+1,...,2n_m+1;2n'_1+1,...,2n'_m+1)$ shake concordant if there exists $m$ disjoint, smooth, properly embedded, compact, connected, genus zero surfaces $F_1$, ..., $F_m$ in $S^3\times[0,1]$ such that
    \begin{itemize}
        \item $F_k\cap(S^3\times\{0\})$ consists of a $2n_{ii}+1$ shaking of $L_i$ and $n_{ij}$ pairs of oppositely oriented framed parallel copies of $L_j$ for each $j\neq i$ such that $\sum_k n_{ik}=n_i$.
        \item $F_k\cap(S^3\times\{1\})$ consists of a $2n'_{ii}+1$ shaking of $L'_i$ and $n'_{ij}$ pairs of oppositely oriented framed parallel copies of $L'_j$ for each $j\neq i$ such that $\sum_k n'_{ik}=n'_i$.
    \end{itemize}
    for all $1\leq k\leq m$. If, in addition, $n_{ij}=n'_{ij}=0$ for all $i\neq j$, then we call the links $L$ and $L'$ strongly shake concordant.
    
    \begin{figure}[!tbp]
        \centering
        \begin{minipage}[b]{0.45\textwidth}
        \includegraphics[width=\textwidth]{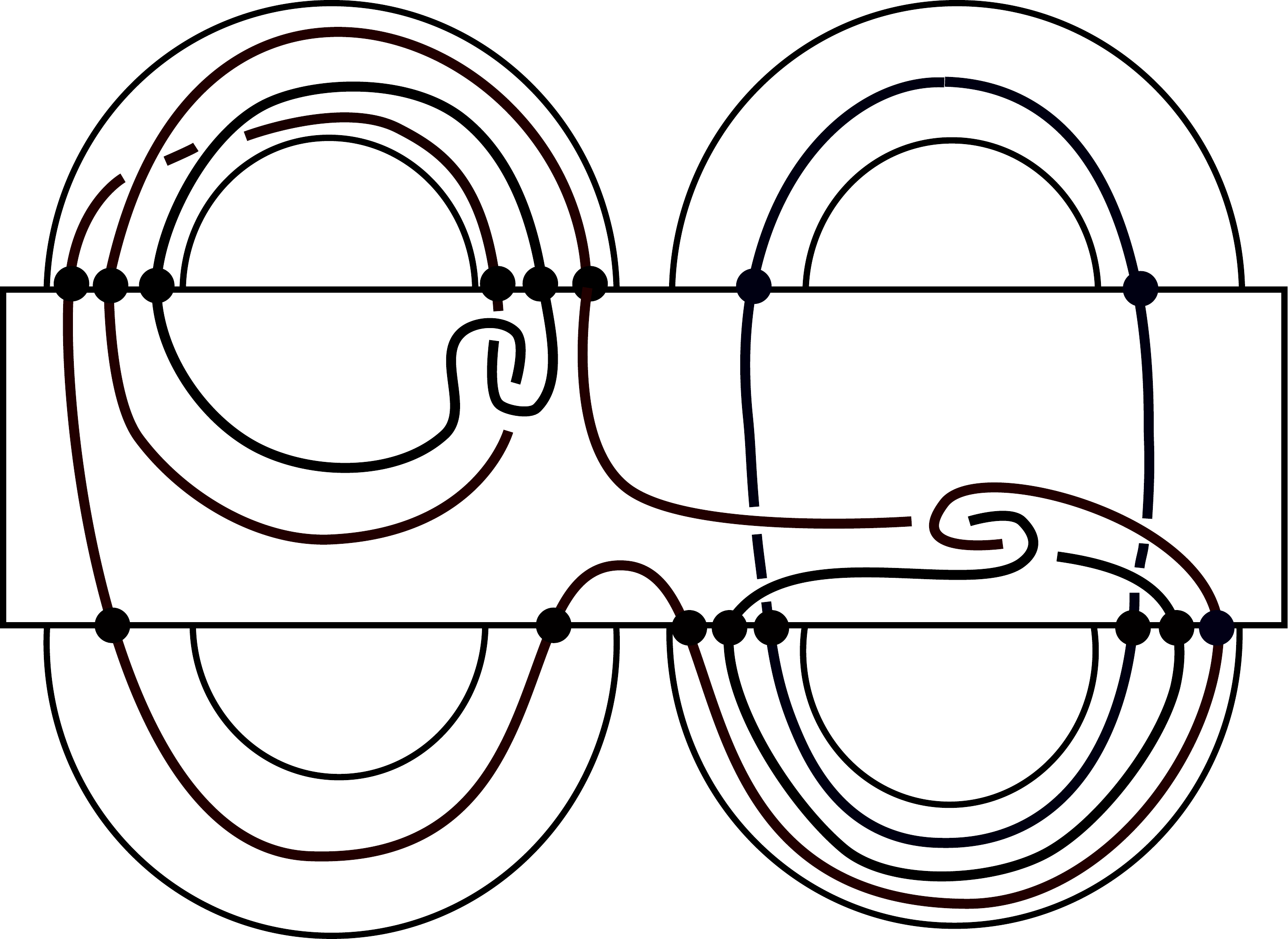}
        \end{minipage}
        \hfill
        \begin{minipage}[b]{0.45\textwidth}
        \includegraphics[width=\textwidth]{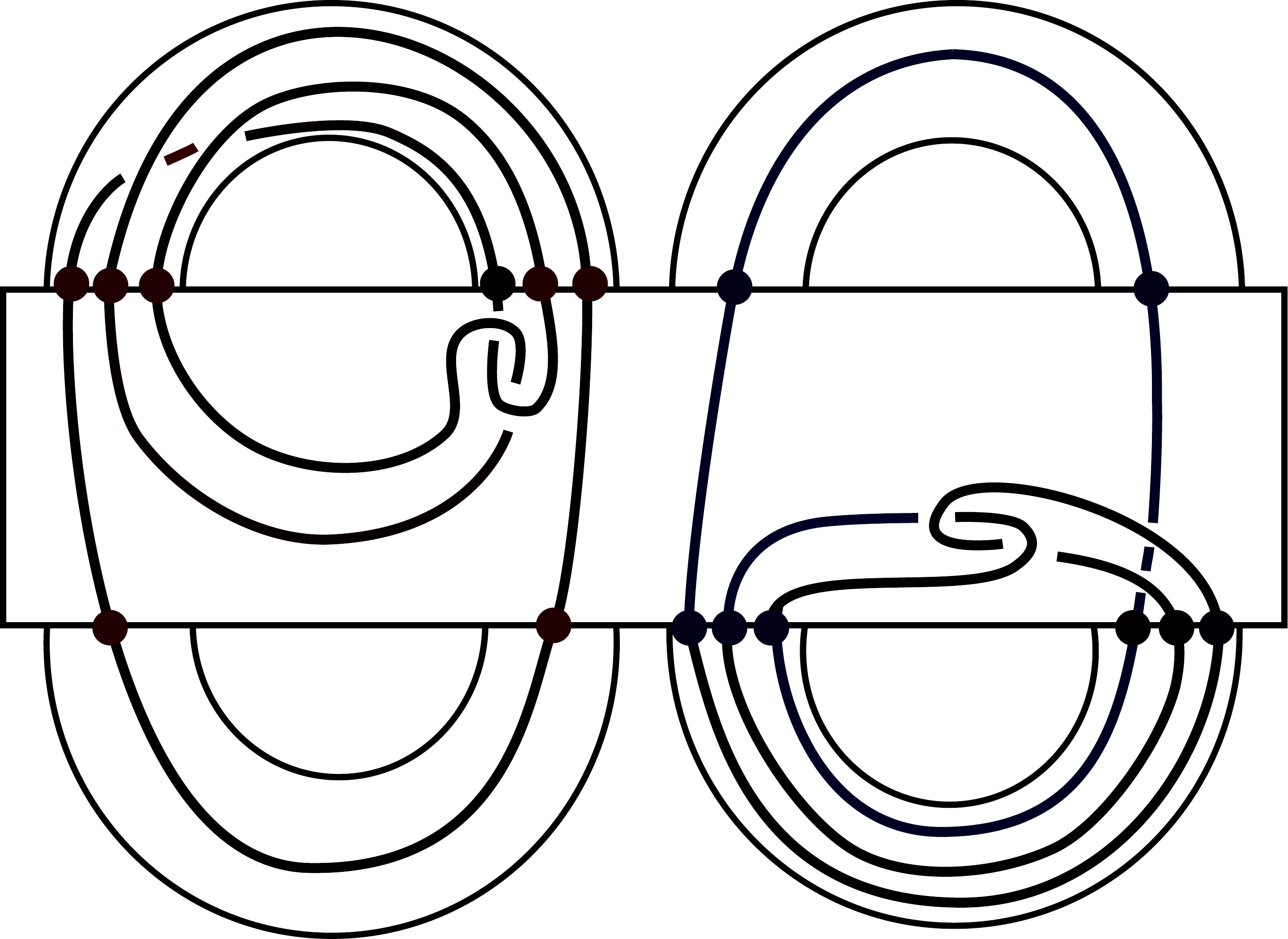}
        \end{minipage}
        \label{fig:sh_conc}
        \caption{A shake concordance (left) and strong shake concordance (right) between 2-component links.}
    \end{figure}
    
    Milnor introduced the $\bar{\mu}_L(I)$ invariants in \cite{milnor54} and \cite{milnor57}; they are a family of invariants for a given $m$-component link $L$ defined for multi-index $I=i_1i_2\cdots i_k$ where $1\leq i_j\leq m$. We let $|I|=k$ denote the length of the multi-index. They are defined algebraically from the link group by measuring how deep longitudes lie in the lower central series of the group, which gives rise to an indeterminacy in the higher order invariants if the lower invariants do not vanish. Where well defined, the $\bar{\mu}$ invariants are concordance invariants. To avoid the indeterminacy, study is often focused on the first non-vanishing Milnor invariants; that is, non-zero $\bar{\mu}_L(I)$ such that $\bar{\mu}_L(I')=0$ for all $|I'|<|I|$. Note that the $\bar{\mu}$ invariants generalize linking number; for instance, $\bar{\mu}_L(ij)=lk(L_i,L_j)$ and those of greater length, such as $\bar{\mu}(123)$, are able to detect higher order linking as in the Borromean rings. For a slice link, the $\bar{\mu}$ invariants all vanish; though the converse is not true.
    
\section{Examples of Shake Concordant and Slice Links}
    Given any two knots $K$ and $J$, we can add an unknotted meridianal component to each to obtain shake concordant links as in Figure \ref{fig:knots_with_meridians}. Hence, invariants of components of a link are not shake concordance invariants. Analogously, let $W(K)$ denote the 2-component link formed by the untwisted Whitehead double of $K$ and an unknotted component as in Figure \ref{fig:shaking_whitehead_double}. Note when the knot is the unknot $K$, $W(U)$ is the Whitehead link.
    
    \begin{proposition}
    For any two knots $K$ and $J$, the links $W(K)$ and $W(J)$ are shake concordant.
    \end{proposition}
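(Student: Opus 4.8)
The plan is to construct a shake concordance from $W(K)$ to $W(J)$ directly, by trading the ``extra'' crossing information of the Whitehead doubles for auxiliary parallel copies of the meridian component. Recall that $W(K) = Wh(K) \sqcup \eta$, where $Wh(K)$ is the satellite of $K$ with the untwisted Whitehead pattern $P$ sitting in a solid torus $V = \nu(K)$, and $\eta$ is an unknot isotopic to a meridian of $V$; in particular $\mathrm{lk}(\eta, Wh(K)) = 0$ and a meridian disk of $V$ meets $Wh(K)$ in exactly two points of opposite sign. The point of keeping the meridian $\eta$ in the picture is that every parallel copy of $\eta$ is again a meridian of $V$, hence encircles the two strands of $Wh(K)$ that run longitudinally around the companion; two such copies, one encircling each of two different longitudinal ``bundles'' of $Wh(K)$, will be used to change crossings of the companion.

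The first step is the geometric observation that $Wh(K)$ and $Wh(U)$ differ by a finite sequence of crossing changes, each realized by passing one longitudinal bundle of the Whitehead double past another. Indeed, a sequence of crossing changes unknotting the companion $K$ lifts, through the satellite construction, to such a sequence for $Wh(K)$ (this can also be seen by noting that $W(K)$ is the infection of $W(U)$ along a $0$-framed parallel copy of $\eta$ by the knot $K$, which leaves $\eta$ itself unchanged since the infection curve has linking number zero with it); the same holds for $J$. Crucially, each of these bundle-crossing-changes occurs at a place where a parallel copy of $\eta$ encircles each of the two bundles and can therefore serve as a crossing-change circle.

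The heart of the argument is a local construction turning each bundle-crossing-change into a piece of a shake concordance. On the $\eta$-component I would take the product annulus $\eta \times [0,1]$, placed away from everything else. For the $Wh(K)$-component I would take the product cobordism away from the crossing changes; at each crossing change to be performed from the $S^3 \times \{0\}$ side I would insert, at that level, a pair of oppositely oriented, $0$-framed parallel copies of $\eta$, one encircling each of the two bundles involved, and then --- by the band construction in the proof of Proposition~3.1 of \cite{bosman20} --- band moves onto these two copies realize the crossing change while absorbing the copies into the link. Performing the crossing changes that carry $Wh(K)$ to $Wh(U)$ from the $t=0$ side, and (run backwards) those that carry $Wh(J)$ to $Wh(U)$ from the $t=1$ side, and gluing the two halves along a level at which the link is the single circle $Wh(U)$, produces a connected, properly embedded surface $F_1$ in $S^3 \times [0,1]$ whose boundary at $t=0$ is one copy of $Wh(K)$ together with $N_K$ oppositely oriented pairs of parallel copies of $\eta$, and whose boundary at $t=1$ is one copy of $Wh(J)$ together with $N_J$ such pairs. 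Since $F_1$ is connected and built entirely from bands, an Euler characteristic count gives that it has genus $0$; together with the annulus $F_2 = \eta \times [0,1]$ on the second component, this is precisely a shake concordance between $W(K)$ and $W(J)$.

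The step I expect to be the main obstacle is the bookkeeping in this local construction. A parallel copy of $\eta$ encircles two longitudinal strands of $Wh(K)$ at once rather than a single strand, so one must keep careful track of which bundles are being crossed and of their orientations, so that the auxiliary copies of $\eta$ really do occur in oppositely oriented pairs (as the definition of shake concordance demands) and so that the bands can be routed to keep the cobordism connected and of genus zero. This is exactly the type of analysis performed in the proof of Proposition~3.1 of \cite{bosman20}, illustrated in Figure~\ref{fig:knots_with_meridians}, now carried out for the longitudinal bundles of the Whitehead double rather than directly for a knot.
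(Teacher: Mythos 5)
This is essentially the paper's own argument: both pass through the Whitehead link $W(U)$, realize the crossing changes that unknot the companion $K$ (equivalently, band pass moves on the Whitehead double) by band-summing with oppositely oriented pairs of $0$-framed parallel copies of $\eta$ supplied by a shaking of the second component, and then glue the two halves along $W(U)$. The paper handles the orientation bookkeeping you flag as the main obstacle by exhibiting a $3$-shaking of $\eta$ that achieves a single band pass move (Figure~\ref{fig:shaking_double_crossing}), yielding a $(1,2c_K+1;1,2c_J+1)$-shake concordance between $W(K)$ and $W(J)$ with $c_K,c_J$ the unknotting numbers.
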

    \begin{proof}
    Notice that by taking a 3-shaking of the unknotted component of $W(K)$ we are able to achieve a band pass move as in Figure \ref{fig:shaking_double_crossing} by fusing each arc of the band with copy of the unknot with same orientation. Thus, if we let $c_K$ denote the unknotting number of $K$, then by taking a $(2c_K+1)$-shaking of the unknotted component, we are able to completely unknot $K$ in $W(K)$ by accomplishing $c_K$ band pass moves. As we started with the untwisted Whitehead double, there will be no half twists in the resulting Whitehead link $W(U)$, as in Figure \ref{fig:shaking_whitehead_double}. That is, there is a $(1,2c_K+1;1,1)$-shake concordance between $W(K)$ and the Whitehead link $W(U)$. Similarly, there is a $(1,2c_J+1;1,1)$-shake concordance between $W(J)$ and the Whitehead link $W(U)$. Gluing the surfaces representing these shake concordances together gives a $(1,2c_K+1;1,2c_J+1)$-shake concordance between $W(K)$ and $W(J)$.
    \end{proof}
    \begin{figure}
        \centering
        \includegraphics[height=1.3in]{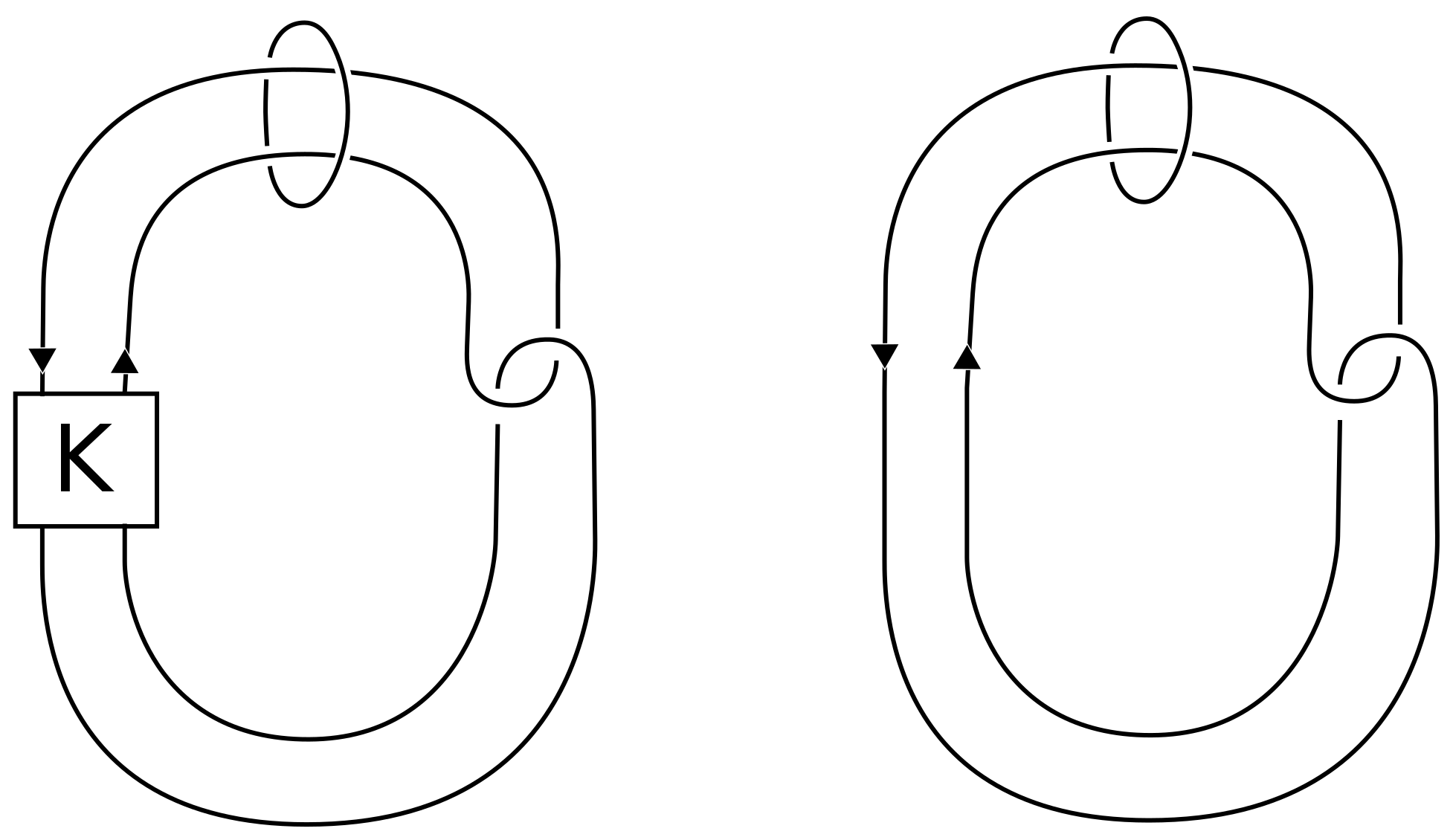}
        \caption{The link formed by the 0-framed Whitehead double of a knot $K$ and an unknotted component (left) is shake concordant to the Whitehead link (right).}
        \label{fig:shaking_whitehead_double}
    \end{figure}
    \begin{figure}
        \centering
        \includegraphics[height=1.3in]{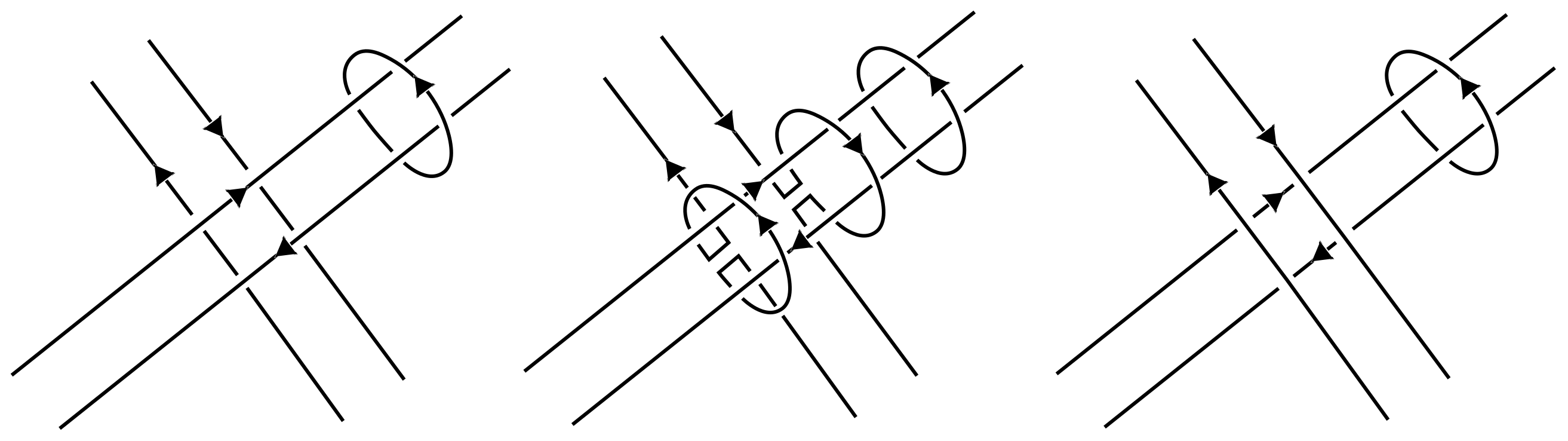}
        \caption{}
        \label{fig:shaking_double_crossing}
    \end{figure}
    
    Given a knot $K$ we can form the $2$-component Bing double of $K$ denoted $BD(K)$. It is well-known that the Bing double of a slice knot is a slice link; the following proposition generalizes this.
    \begin{proposition}
        Given a shake slice knot $K$, the Bing double of $K$, denoted $BD(K)$ is a strongly shake slice link.
    \end{proposition}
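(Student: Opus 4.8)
The plan is to reduce to the fact that the Bing double of the unknot is the two–component unlink, and therefore slice: since $K$ is shake slice I will argue that, after passing to a sufficiently large shaking, $BD(K)$ can be converted into the corresponding shaking of $BD(U)$ by band pass moves realized \emph{inside the genus–zero surfaces of the shaking}, in the spirit of the proof of the preceding proposition. Recall that $BD(K)=\mathcal{B}_1\sqcup\mathcal{B}_2$ lies in a tubular neighborhood $V$ of $K$, that each $\mathcal{B}_i$ has winding number $0$ in $V$, and that a crossing change on the companion $K$ transforms $BD(K)$ into $BD(K')$ for the corresponding knot $K'$; in particular $BD(U)$ is the two–component unlink, which bounds disjoint disks in $B^4$.

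In more detail: since $K$ is shake slice it is shake concordant to the unknot, and by the theorem above $\operatorname{Arf}(K)=0$, so $K$ is carried to the unknot by finitely many pass moves; fix such a sequence. Each pass move on $K$ is supported in a ball meeting $K$ in a pair of arcs, hence meeting $\mathcal{B}_1\sqcup\mathcal{B}_2$ in a controlled collection of arcs running through $V$, and — using the parallel copies supplied by a shaking of $\mathcal{B}_1$ and of $\mathcal{B}_2$ — this pass move should be realizable by a band pass move on $\mathcal{B}_1\sqcup\mathcal{B}_2$, that is, by a pair of disjoint, properly embedded, genus–zero surfaces in $S^3\times[0,1]$, one carrying only copies of $\mathcal{B}_1$ and the other only copies of $\mathcal{B}_2$, exactly as the shaken unknotted component was used to realize a band pass move in the preceding proposition. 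Concatenating these cobordisms turns the given shaking of $BD(K)$ into the corresponding shaking of $BD(U)$, a union of parallel copies of the two unlink components, which bounds disjoint disks $\Sigma_1,\Sigma_2\subset B^4$ with $\Sigma_i$ bounding precisely the copies of the $i$th unlink component. Gluing $\Sigma_1$ and $\Sigma_2$ onto the concatenated cobordisms produces disjoint, genus–zero, properly embedded surfaces in $B^4$ bounding the original shaking of $BD(K)$, the surface bounding copies of $\mathcal{B}_i$ meeting no copy of $\mathcal{B}_j$ for $j\neq i$; this is the definition of strongly shake slice.

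The step I expect to be the main obstacle is the middle one: carrying out a single pass move on $K$ as a band pass move on $\mathcal{B}_1\sqcup\mathcal{B}_2$ inside a shaking of controlled size while keeping the two components' surfaces disjoint and genus zero — in particular, resolving the Bing clasp between $\mathcal{B}_1$ and $\mathcal{B}_2$ in the process without introducing genus and without letting the surface for $\mathcal{B}_i$ absorb copies of $\mathcal{B}_j$. This is the picture–level verification analogous to the one in the preceding proposition, and it is precisely here that shake sliceness of $K$ (rather than a weaker hypothesis) is what makes the required moves available: near a crossing of $K$ the two strands of the doubled curve $\mathcal{B}_i$ running through $V$ play the role of the band of a band pass move, and the parallel copies from the shaking play the role of the shaken unknotted component above, so that $\operatorname{Arf}(K)=0$ is exactly the condition that lets one run the whole sequence. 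One then needs only to check the bookkeeping — that a shaking of bounded size supplies enough parallel copies for all the pass moves and for the capping disks $\Sigma_1,\Sigma_2$ — which should be routine once the single band pass move is set up.
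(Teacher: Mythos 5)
Your approach is genuinely different from the paper's, and it has a real gap at exactly the point you flag as the ``main obstacle.'' The paper does not unknot $K$ at all: it takes the $2n+1$ shaking of $K$ that bounds a genus-zero surface in $B^4$, doubles it to get $4n+2$ parallels of $K$ bounding two disjoint genus-zero surfaces, bands together oppositely oriented pairs of these parallels through a small unknotted circle $U$, and pushes the disk $D$ bounded by $U$ slightly into $B^4$. The banded surface $\Sigma$ and the disk $D$ are then disjoint genus-zero surfaces in $B^4$ whose boundary is a $(2n+1,1)$-shaking of $BD(K)$: the banded parallels of $K$ form the $2n+1$ copies of one Bing component and $U$ is the single copy of the other. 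This is a direct, uniform construction in $B^4$; no pass moves, no unknotting of $K$, and no appeal to $\operatorname{Arf}(K)=0$.

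The step you defer is, I believe, where the argument breaks. In the Whitehead-double proposition, the band pass move on the doubled component is realized by fusing the band arcs with shaken parallel copies of the \emph{unknotted meridian} component of $W(K)$; the mechanism works because that component bounds a small disk transverse to $K$, and its parallels are tiny unknots that can be absorbed into the band. The Bing double $BD(K)=\mathcal{B}_1\sqcup\mathcal{B}_2$ has no such meridian component: both $\mathcal{B}_1$ and $\mathcal{B}_2$ are winding-number-zero satellites of $K$, and their parallel copies from a shaking are globally complicated curves, not small unknots localized near a crossing of $K$. A pass move on $K$ affects both $\mathcal{B}_1$ and $\mathcal{B}_2$ simultaneously (eight strands in all), and you have offered no mechanism for realizing this as a pair of disjoint genus-zero cobordisms with the surface for $\mathcal{B}_i$ absorbing only copies of $\mathcal{B}_i$. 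Separately, your reasoning about the hypothesis is internally inconsistent: you say shake sliceness ``rather than a weaker hypothesis'' is what is needed, yet the only consequence you invoke is $\operatorname{Arf}(K)=0$, which is strictly weaker. If your sketch worked it would prove the much stronger claim that $\operatorname{Arf}(K)=0$ implies $BD(K)$ strongly shake slice, and that should have alerted you that the full strength of the hypothesis is not being used. The paper's proof, by contrast, uses shake sliceness exactly: the genus-zero surface in $B^4$ bounding a shaking of $K$ is the raw material from which the two surfaces for $BD(K)$ are built.
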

    \begin{proof}
        Since $K$ is shake slice, there exists a $2n+1$ shaking of $K\subset S^3$ that bounds a 0-genus surface in $B^4$. Then $4n+2$ parallels of $K$, with $2n+1$ of each orientation, will bounds two disjoint, smooth, 0-genus surfaces in $B^4$. We can attach pairs of oppositely oriented parallels of $K$ with bands passing through an unknotted component as in Figure \ref{fig:shaking_bing_double}. The bands fuse the surfaces into a single 0-genus surface $\Sigma$ in $B^4$. And we can push into $B^4$ the intersections of the bands with the disk $D$ bounded by the unknotted component to make $\Sigma$ and $D$ disjoint, smooth, $0$-genus surfaces in $B^4$. Notice the resulting link is a $(2n+1,1)$-shaking of $BD(K)$, the Bing double of $K$. Hence $BD(K)$ is strongly shake slice.
    \end{proof}
    \begin{figure}
        \centering
        \includegraphics[height=2.2in]{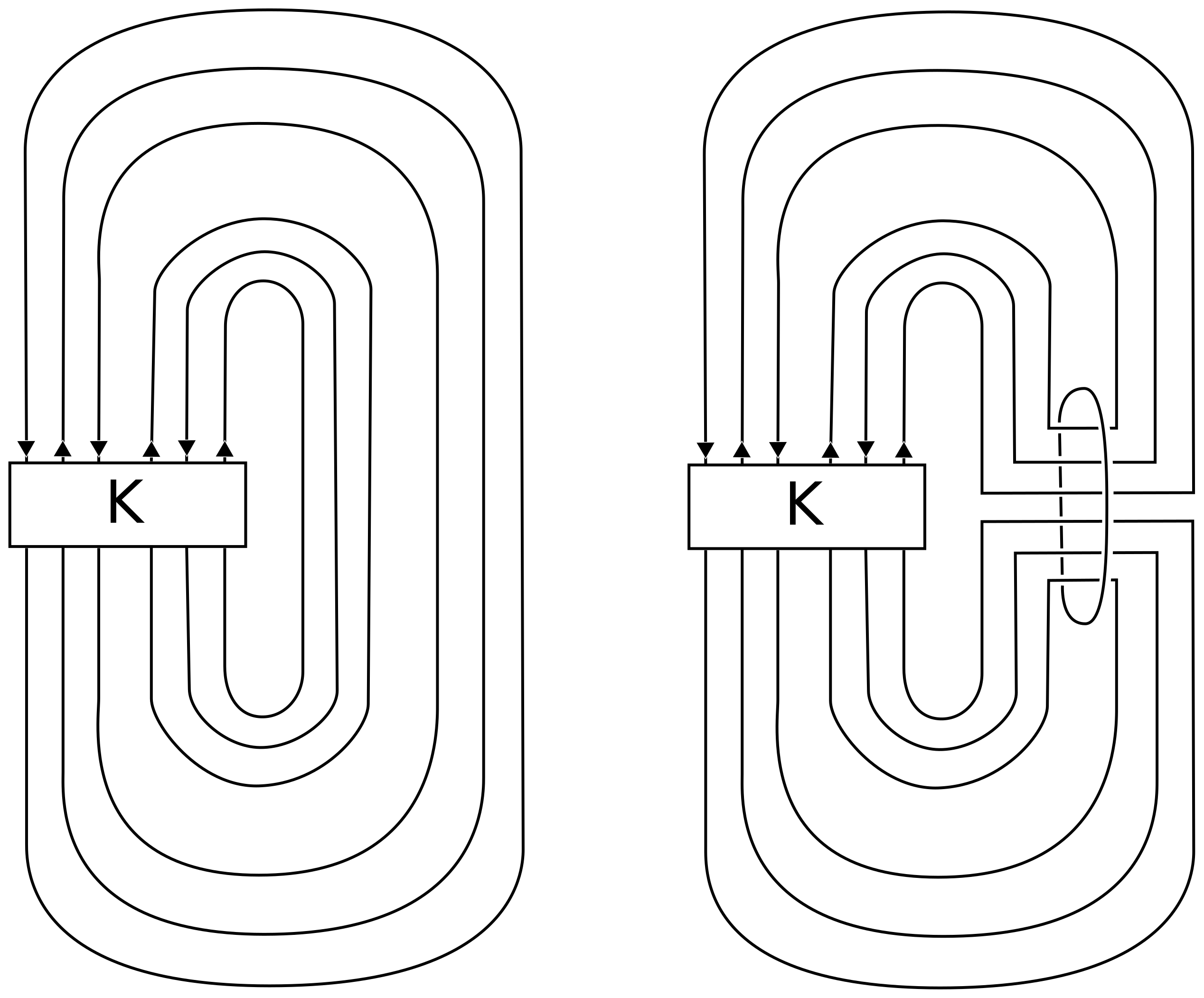}
        \caption{Attaching 3 bands to 6 parallel copies of $K$ (left) and adding an unknotted component to form a (3,1)-shaking of $BD(K)$ (right).}
        \label{fig:shaking_bing_double}
    \end{figure}

\section{Homology Cobordant}
    We call two closed, oriented 3-manifold $M_1$ and $M_2$ {\it homology cobordant} if there exists a compact, oriented 4-manifold $W$ such that $\partial W=M_1\sqcup -M_2$ and the maps induced by inclusion $H_n(M_i;\mathbb{Z})\rightarrow H_n(W;\mathbb{Z})$, $i=1,2$, are isomorphisms for all $n$.
 
     \begin{proposition}
     Suppose $m$-component links $L$ and $L'$ are shake concordant. Then the zero surgery manifolds $M_L$ and $M_L'$ are homology cobordant.
     \label{homology_cobordant}
     \end{proposition}
 
     \begin{proof}
     This has already been shown when $m=1$ in (\cite{cochranray16}, Proposition 5.1) which generalizes as follows. Recall $W_{L,L'}$, the 4-manifold obtained by attaching 2-handles along $L$ and $L'$, has boundary components $M_L$ and $-M_{L'}$. We have
     \[	H_n(W_{L,L'})\cong
     	\begin{cases}
    		\mathbb{Z} & n=0,3\\
    		0 & n=1\\
    		\mathbb{Z}^{2m} & n=2\\
    		0 & n\geq 4
    	\end{cases},\;\;\;\;\; H_n(M_L)\cong H_n(M_{L'})\cong
     	\begin{cases}
    		\mathbb{Z} & n=0, 3\\
    		\mathbb{Z}^{m} & n=1\\
    		\mathbb{Z}^{m} & n=2\\
    		0 & n\geq 4
    	\end{cases}.
    \]
 
    We will modify $W_{L,L'}$ such that the inclusion maps from $M_L$ and $M_{L'}$ into the modified 4-manifold induce isomorphisms on homology. Let $\Sigma_1,...,\Sigma_m\hookrightarrow W_{L,L'}$ be the embedded spheres guaranteed by the definition of shake concordance of links. We can perform surgery on each $\Sigma_i$ by removing a neighborhood of $\Sigma_i$, which is diffeomorphic to $S^2\times D^2$, and gluing in a copy of $D^3\times S^1$, which we can do since $\partial(S^2\times D^2)=S^2\times S^1=\partial(D^3\times S^1)$. Denote the resulting 4-manifold $\overline{W}$. Notice, this the effect of killing half the generators of the second homology group by killing $(\bar{e}_i,\bar{e}_i)$, for $i=1,...,m$. Also, this introduces $m$ generators for the first homology group. A Mayer-Vietoris argument verifies 
    \[	H_n(\overline{W})\cong
     	\begin{cases}
    		\mathbb{Z} & n=0, 3\\
    		\mathbb{Z}^{m} & n=1\\
    		\mathbb{Z}^{m} & n=2\\
    		0 & n\geq 4
    	\end{cases}
    \]
    and that the induced maps from inclusion give the desired isomorphisms.
    \end{proof}

     In \cite{harvey08} Harvey introduced the real-valued homology cobordism invariants $\rho_n$ for closed 3-manifolds. It follows from the above proposition that $\rho_n$, and other homology cobordism invariants, can be treated be invariants of shake concordance of links.

    Moreover, Levine's algebraic knot concordance class \cite{levine69a}, \cite{levine69b} is determined by the zero surgery manifold of a knot via the Blanchfield form and preserved under homology cobordism \cite{trotter73}. This give rise to the following corollary of Proposition \ref{homology_cobordant}:

    \begin{corollary}[Corollary 5.2 in \cite{cochranray16}]
    	If knots $K$ and $K'$ are shake concordant, then the algebraic concordance class of $K$ and $K'$ agree and hence $K$ and $K'$ have equal signatures and Arf invariants.
    \end{corollary}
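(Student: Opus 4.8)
The plan is to deduce this statement as the one-component specialization of Proposition~\ref{homology_cobordant} together with the algebraic facts recalled in the preceding paragraph. Concretely, since $K$ and $K'$ are shake concordant knots, applying Proposition~\ref{homology_cobordant} with $m=1$ produces a homology cobordism $W$ between the zero surgery manifolds $M_K$ and $M_{K'}$. It then remains to see that the algebraic concordance class of a knot is an invariant of the homology cobordism class of its zero surgery manifold, after which the named corollaries (equality of signatures and of the Arf invariant) follow formally.

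For the algebra, I would first recall that the Blanchfield pairing is a nonsingular $(-1)$-Hermitian pairing $H_1(M_K;\mathbb{Z}[t,t^{-1}])\times H_1(M_K;\mathbb{Z}[t,t^{-1}])\to \mathbb{Q}(t)/\mathbb{Z}[t,t^{-1}]$ on the homology of the infinite cyclic cover $\widetilde{M_K}$ associated to the abelianization $H_1(M_K;\mathbb{Z})\cong\mathbb{Z}$, and that by Levine and Trotter its Witt class both determines and is determined by the algebraic concordance class of $K$; see \cite{levine69a}, \cite{levine69b}, \cite{trotter73}. Because $W$ is a homology cobordism, inclusion induces isomorphisms $H_1(M_K;\mathbb{Z})\xrightarrow{\cong}H_1(W;\mathbb{Z})\xleftarrow{\cong}H_1(M_{K'};\mathbb{Z})$, so $W$ carries a well-defined infinite cyclic cover $\widetilde W$ whose restriction to the two boundary components is $\widetilde{M_K}$ and $\widetilde{M_{K'}}$. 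A half-lives-half-dies argument applied to the pair $(\widetilde W,\widetilde{M_K}\sqcup -\widetilde{M_{K'}})$ over $\mathbb{Q}[t,t^{-1}]$ then shows that $H_1(\widetilde W;\mathbb{Q}(t))$ supplies a metabolizer for the linking form $\lambda_K\oplus(-\lambda_{K'})$, whence $\lambda_K$ and $\lambda_{K'}$ are Witt equivalent and $K$, $K'$ lie in the same algebraic concordance class.

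Finally, the Tristram–Levine signatures and the Arf invariant are functions of the algebraic concordance class — each factors through the Witt class of the Blanchfield (equivalently, Seifert) form — so they coincide for $K$ and $K'$. The only step requiring genuine care is the half-lives-half-dies computation over the principal ideal domain $\mathbb{Q}[t,t^{-1}]$ with fraction field $\mathbb{Q}(t)$, namely checking that the torsion linking form on $H_1$ of the cyclic cover of the cobordism behaves exactly as in the classical knot-concordance setting; but this is precisely the content of \cite{trotter73} (compare \cite{cochranray16}, Corollary~5.2), so I would invoke it rather than reprove it. Thus the entire corollary is immediate once Proposition~\ref{homology_cobordant} is in hand.
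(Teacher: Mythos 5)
Your argument is essentially the paper's: apply Proposition~\ref{homology_cobordant} with $m=1$, then invoke the cited facts of Levine and Trotter that the algebraic concordance class is determined by the Blanchfield form of the zero surgery manifold and is preserved under homology cobordism, from which equality of signatures and Arf invariants follows formally. The paper simply cites that last step (as in Corollary~5.2 of \cite{cochranray16}) rather than sketching it; your half-lives-half-dies outline is the standard way to justify it, with the minor correction that the metabolizer is the kernel of the inclusion-induced map of the torsion Alexander modules over $\mathbb{Q}[t,t^{-1}]$ into the corresponding module of the cobordism, not $H_1(\widetilde{W};\mathbb{Q}(t))$ itself.
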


    What can be said in the case of links? If $L$ and $L'$ are strongly shake concordant, then each corresponding pair of components $L_i$ and $L_i'$ are shake concordant. Therefore we conclude:

    \begin{corollary}
    If $L=L_1\sqcup ...\sqcup L_m$ and $L_1'\sqcup ... \sqcup L_m'$ are strongly shake concordant, then $L_i$ and $L_i'$ have the same algebraic concordance class and hence equal signatures and Arf invariant for all $i=1,...,m$.
    \end{corollary}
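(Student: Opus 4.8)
The plan is to reduce the statement to the knot case, which is already handled by the preceding corollary, so the work is essentially to see that the ``strongness'' hypothesis lets one peel off a single component. First I would unpack the definition: by hypothesis there are disjoint, smooth, properly embedded, compact, connected, genus-zero surfaces $F_1,\dots,F_m$ in $S^3\times[0,1]$ with $n_{ij}=n'_{ij}=0$ for all $i\neq j$. The role of the strongness condition is precisely that the boundary of $F_i$ at level $0$ consists only of a $2n_{ii}+1$ shaking of $L_i$, and its boundary at level $1$ only of a $2n'_{ii}+1$ shaking of $L'_i$; no parallel copies of the other components $L_j$, $L'_j$ appear. Hence $F_i$, considered by itself (discarding the remaining surfaces), is exactly a surface realizing a $(2n_{ii}+1;2n'_{ii}+1)$-shake concordance between the knots $L_i$ and $L'_i$ in the sense of the one-component definition.

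Second, I would invoke Proposition~\ref{homology_cobordant} in the case $m=1$, equivalently the corollary of Cochran and Ray (Corollary 5.2 of \cite{cochranray16}) reproduced above: since $L_i$ and $L'_i$ are shake concordant as knots, their zero surgery manifolds are homology cobordant, and therefore (by the classical fact that the Blanchfield form, and hence Levine's algebraic concordance class, is determined by the zero surgery manifold and preserved under homology cobordism) $L_i$ and $L'_i$ lie in the same algebraic concordance class. Since the algebraic concordance class determines the collection of Tristram--Levine signatures and the Arf invariant, equality of algebraic concordance classes yields equality of these invariants for each $i=1,\dots,m$.

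The only step requiring any care is the first one: one must check that discarding the surfaces $F_j$ for $j\neq i$ leaves a legitimate shake concordance of knots, i.e. that $F_i$ retains all the required properties (smooth, properly embedded, compact, connected, genus zero) and that its two boundary configurations genuinely match the knot-level notion of a shaking rather than only the link-level one. Both are immediate once $n_{ij}=n'_{ij}=0$ is substituted into the definition, so I do not expect a real obstacle here; the corollary is in the end a bookkeeping consequence of applying Proposition~\ref{homology_cobordant} one component at a time.
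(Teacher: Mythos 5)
Your proposal is correct and follows the same route as the paper: you observe that strong shake concordance gives a component-wise shake concordance of knots (since $n_{ij}=n'_{ij}=0$ for $i\neq j$ means the surface $F_i$ alone realizes a knot shake concordance between $L_i$ and $L'_i$), and then you apply the knot-level result of Cochran and Ray. This is exactly the reduction the paper makes.
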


    If $L$ and $L'$ are shake concordant, but not strongly shake concordant, no knot invariant of concordance is preserved in the components of a link. Nevertheless, we can still find numerous obstructions to a knot being shake slice, as we'll see in the next section.

    \section{Obstructions to Shake Sliceness}
    Recall that the 4-genus of a link $L$ is defined by $g_4(L)=\min\{\sum_{i=1}^mg\left(\Sigma_i\right)\}$ where the $\Sigma_1\sqcup\Sigma_2\sqcup\cdots\sqcup\Sigma_m$ denote a collection of disjoint, smooth surfaces embedded in $B^4$ such that each $\Sigma_i$ bounds $L_i\subset \partial B^4$. We have the following bound on the 4-genus of a shake shake link.
    
    \begin{proposition}
    If $L$ is a $(2n_1+1,2n_2+1,...,2n_m+1)$ shake slice link, then $g_4(L)\leq \sum_{k=1}^mn_k$.
    \end{proposition}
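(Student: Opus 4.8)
The plan is to start from the genus-zero surfaces supplied by the (geometric form of the) definition of a shake slice link, attach annuli to their boundaries so as to cut the number of boundary circles of each surface down to one, and keep careful track of how much genus this creates. By hypothesis there are disjoint, smooth, properly embedded, connected, genus-zero surfaces $\Sigma_1,\dots,\Sigma_m\subset B^4$ such that $\partial\Sigma_i$ consists of $2n_{ii}+1$ $0$-framed parallel copies of $L_i$ (exactly $n_{ii}+1$ of them oriented as $L_i$) together with, for each $j\neq i$, $2n_{ij}$ $0$-framed parallel copies of $L_j$ (exactly $n_{ij}$ of them oriented as $L_j$), where $\sum_j n_{ij}=n_i$. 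Thus $\Sigma_i$ has $2n_i+1$ boundary circles and Euler characteristic $1-2n_i$.

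Fix $i$. First I would pair up the boundary circles of $\Sigma_i$: match each of the $n_{ii}$ reversed copies of $L_i$ with a distinct copy of $L_i$ oriented as $L_i$, leaving exactly one such copy $\ell_i$ unmatched; and for each $j\neq i$ match the $n_{ij}$ copies of $L_j$ oriented as $L_j$ with the $n_{ij}$ oppositely oriented copies. This produces exactly $n_i$ pairs, each consisting of two oppositely oriented $0$-framed parallel copies of a single link component, hence cobounding an embedded annulus lying in a tubular neighborhood in $S^3$ of that component, with induced boundary orientation matching that of $\partial\Sigma_i$. Pushing these $n_i$ annuli slightly into $B^4$ and gluing them to $\Sigma_i$ produces a connected, smooth, properly embedded, oriented surface $\Sigma_i'$ whose boundary is the single circle $\ell_i$, which is isotopic in $S^3$ to $L_i$. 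Each such annulus gluing, being along two distinct boundary circles of a connected surface, leaves the Euler characteristic unchanged while removing two boundary circles, so $\chi(\Sigma_i')=1-2n_i$ with a single boundary circle, and therefore $\Sigma_i'$ has genus $n_i$.

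It then remains to arrange that the $\Sigma_i'$ are pairwise disjoint and that $\ell_1\sqcup\dots\sqcup\ell_m$ is isotopic to $L$; the latter is immediate since $\ell_k$ is a parallel copy of $L_k$ inside a tubular neighborhood $N(L_k)$ and these neighborhoods are disjoint for distinct $k$. For the former: near each component $L_k$ all of the relevant parallel copies (the $2n_{kk}+1$ bounding $\Sigma_k$, together with the $2n_{ik}$ bounding $\Sigma_i$ for each $i\neq k$) lie in a single copy of $S^1\times D^2$ as disjoint pushoffs $S^1\times\{p\}$; after a small isotopy we may take the points $p$ to lie on a chord of $D^2$, grouped according to which surface bounds them, and then within each group pair them up by nested arcs. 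The resulting product annuli are mutually disjoint and meet the $\Sigma_i$'s only along their gluing circles, so the $\Sigma_i'$ are disjoint. Hence $\Sigma_1'\sqcup\dots\sqcup\Sigma_m'$ is a collection of disjoint smooth surfaces in $B^4$ bounding $L$ of total genus $\sum_{k=1}^m n_k$, and $g_4(L)\leq\sum_{k=1}^m n_k$.

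I expect the one place needing genuine care is this last disjointness bookkeeping—realizing all of the attaching annuli simultaneously as disjoint embedded surfaces near the boundary sphere; the rest (the Euler characteristic count, and the fact that an oppositely oriented parallel pair bounds an embedded annulus compatibly oriented with $\partial\Sigma_i$) is routine.
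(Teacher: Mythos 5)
Your proof is correct and takes essentially the same approach as the paper: starting from the genus-zero shake-slicing surfaces, each $\Sigma_i$ is modified into a surface with a single boundary circle isotopic to $L_i$, and the resulting genus is tracked via the Euler characteristic to give total genus $\sum_k n_k$. The only cosmetic difference is that you glue product annuli along the oppositely oriented parallel pairs, whereas the paper fuses those pairs with bands and then caps off the resulting unknots --- these two operations have the identical net effect on $\chi$ and boundary count --- and you additionally spell out a disjointness argument that the paper leaves implicit.
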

    \begin{proof}
        Suppose there is a $(2n_1+1,...,2n_m+1)$ shaking of $L$ that bounds genus $0$ surfaces $\Sigma_1,...,\Sigma_m$ in the necessary way to make $L$ shake slice. In particular, $\Sigma_1$ bounds $2n_{11}+1$ copies of $L_1$, exactly $n_{11}+1$ of which have the same orientation as $L_1$, and $2n_{1j}$ copies of $L_j$, half of which have the same orientation as $L_j$, for each $1<j\leq m$. We may fuse the $n_{1k}$ pairs of oppositely orientated parallels of $L_k$ with bands, for all $k=1,...,m$. Note each band fusion of a pair of oppositely oriented parallels produces an unknot which we may push down and cap off in $B^4$. Hence, $L_1$ bounds a smooth surface of genus $n_{11}+\cdots+n_{1m}$. Similarly, each $L_j$ bounds a smooth surface of genus $n_{j1}+\cdots+n_{jm}$ and these surfaces are disjoint. Note the sum of the genera of these surfaces is $g_4(L)=\sum_{i=1}^m\sum_{j=1}^mn_{ij}=n_1+\cdots+n_m$.
    \end{proof}
    
    \begin{corollary}
    \label{cor:shake_disjoint}
    If $L$ is a shake slice link, then the pairwise linking numbers all vanish.
    \end{corollary}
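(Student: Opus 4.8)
The plan is to read off from the proof of the previous proposition that a shake slice link bounds \emph{disjoint} surfaces in $B^4$, one for each component, and then to observe that two link components bounding disjoint surfaces in $B^4$ must have zero linking number. Recall that the construction in that proof starts with the genus-zero shake slice surfaces $\Sigma_1,\dots,\Sigma_m$, fuses the $n_{ik}$ pairs of oppositely oriented parallel copies of $L_k$ bounded by $\Sigma_i$ along bands (for all $k$), and caps off the resulting unknots by disks pushed into $B^4$; the output is a family of disjoint, connected, smooth, properly embedded surfaces $\Sigma_1',\dots,\Sigma_m'$ in $B^4$ with $\partial\Sigma_i'=L_i$. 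For the present statement the genus of $\Sigma_i'$ plays no role; only the disjointness matters.

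It then remains to verify the general fact that if disjoint knots $K_1,K_2\subset S^3=\partial B^4$ bound disjoint, connected, properly embedded surfaces $F_1,F_2$ in $B^4$, then $lk(K_1,K_2)=0$. I would argue as follows. First, $H_1(B^4\setminus F_1)\cong\mathbb{Z}$, generated by a meridian of $F_1$; this is a standard consequence of Alexander duality (or a short Mayer--Vietoris computation) for a connected properly embedded surface with connected boundary. Since a meridian of $K_1$ in $S^3$ is isotopic in $B^4\setminus F_1$ to a meridian of $F_1$, the inclusion induced map $H_1(S^3\setminus K_1)\to H_1(B^4\setminus F_1)$ carries the generator to the generator and is therefore an isomorphism $\mathbb{Z}\to\mathbb{Z}$. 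Under the usual identification $H_1(S^3\setminus K_1)\cong\mathbb{Z}$ one has $[K_2]=lk(K_1,K_2)\cdot[\mu]$, where $\mu$ is a meridian of $K_1$; but $K_2=\partial F_2$ with $F_2\subset B^4\setminus F_1$, so $[K_2]=0$ in $H_1(B^4\setminus F_1)$, and transporting this back through the isomorphism forces $lk(K_1,K_2)=0$.

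Applying this with $K_1=L_i$, $K_2=L_j$, $F_1=\Sigma_i'$, $F_2=\Sigma_j'$ for each $i\neq j$ completes the proof. This is genuinely a corollary, so there is no serious obstacle; the only point that takes a moment to pin down is the homological input about the complement of a properly embedded surface in $B^4$, and even that is routine. The substance of the statement is entirely captured by the disjointness of the surfaces produced in the previous proposition, since having disjoint Seifert-type surfaces in $B^4$ is exactly the geometric shadow of a vanishing linking number.
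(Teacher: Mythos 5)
Your proof is correct and relies on exactly the same geometric input as the paper's: the disjoint surfaces $\Sigma_i'$ in $B^4$ bounding the components $L_i$, extracted from the proof of the preceding proposition. The only (minor) difference is in the final elementary step: the paper invokes the identification of $lk(L_i,L_j)$ with the algebraic intersection number $\Sigma_i'\cdot\Sigma_j'$ of the bounding surfaces, which is zero because the surfaces are disjoint, whereas you derive the same conclusion by computing $H_1(B^4\setminus\Sigma_i')\cong\mathbb{Z}$ via Alexander duality and observing that $[L_j]$ must vanish there since $L_j$ bounds $\Sigma_j'$ in the complement. Both are routine reformulations of the same fact, so this is essentially the paper's argument spelled out in slightly more detail.
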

    \begin{proof}
    Given any two components $L_i$ and $L_j$ of $L$, there exist disjoint smooth surfaces $\Sigma_i$ and $\Sigma_j$ in $B^4$ that bound $L_i$ and $L_j$, and therefore $lk(L_i,_j)=\Sigma_i\cdot\Sigma_j=0$.
    \end{proof}
    
    There is a much stronger obstruction from the higher order $\bar{\mu}$ invariants. The first non-vanishing Milnor $\bar{\mu}$ invariant is an invariant of shake concordance (see Theorem 5.2 in \cite{bosman20}). Therefore, since shake slice links are shake concordant to a trivial link, all the $\overline{\mu}$ invariants vanish when $L$ is shake slice, greatly generalizing Corollary \ref{cor:shake_disjoint}.

    Recall the Arf invariant, defined for knots, can be extended to proper links, that is, links $L$ such that \[\sum_{i\neq j}lk(L_i.L_j)\equiv0\pmod 2.\] Note that shake slice links and sublinks of shake slice links are proper as $lk(L_i,L_j)=0$ for $i\neq j$.  Suppose a planar surface embedded in $S^3\times[0,1]$ bounds $(L\times \{0\})\cup (K\times \{1\})$ for a proper link $L$ and some knot $K$. Then $Arf(K)$ depends only on $L$ so we may define $Arf(L):= Arf(K)$ for any such $K$ \cite[p. 45]{hillman12}.

\begin{theorem}
		If a link $L=L_1\sqcup L_2\sqcup ...\sqcup L_m$  is shake slice, then:
	\begin{itemize}
		\item $Arf(L)=0$,
		\item $Arf(L_i)=0$ for $i=1,...,m$, and
		\item $Arf(L_i \sqcup L_j)=0$ for $i\neq j$.
	\end{itemize}
	\end{theorem}
	\begin{proof}
	Since $L$ is shake slice, there exists $sh(L)$ a shaking of $L$ and a smoothly embedded genus surface in $S^3\times[0,1]$ that has boundary $L'\times \{0\}\cup T\times\{1\}$ where $T$ is a trivial link. Hence, by fusing the unknotted components of $T$, we have a planar surface cobounding $sh(L)$ and an unknot $U$, hence $Arf(sh(L))=Arf(U)$.
	We can fuse components of $L$ until we obtain some knot $K$ and hence there exists a planar surface bounding $L\times \{0\}\cup K\times\{1\}$. Therefore $Arf(L)=Arf(K)$. Note there also exists a planar surface bounding $sh(L)\times \{0\}\cup K\times\{1\}$ obtained by fusing pairs of parallel copies of each component of $L_i$ with opposite orientation to obtain $L$ then fusing the components of $L$ as before to obtain $K$. Hence,
	\[Arf(L)=Arf(K)=Arf(L')=Arf(U)=0.\]
	
	To see $Arf(L_i)=0$ for each component $L_i$ of $L$, observe that since $L$ is shake slice, there exists a sublink $L'_i$ of $sh(L)$ consisting of an odd number of parallel copies of $L_i$ and even number of parallel copies of each $L_j$ for $j\neq i$ such that a planar surface has boundary  $L'_i\times\{0\}\sqcup S\times\{1\}$ for some trivial link $S$. Then capping off all but one component of $S$, we have $Arf(L'_i)=Arf(U)$. But also notice that we can fuse pairs of parallel copies constituting $L'_i$ to obtain a planar surface cobounding $L'_i$ and $L_i$. Hence,
	\[Arf(L_i)=Arf(L'_i)=Arf(U)=0\]
	for all $i=1,...,m$.
	
	Finally, Beiss \cite{beiss90} has shown that for a two component link $L_1\sqcup L_2$ we have
	\[Arf(L_1\sqcup L_2)=Arf(L_1)+Arf(L_2)+\bar{\mu}_{L_1\sqcup L_2}(1122) \pmod{2}.\]
	Hence, since the Milnor invariants of $L$ all vanish, we have for any two-component sublink $L_i\sqcup L_j$ of $L$,
	\[Arf(L_i\sqcup L_j)=0,\]
	where $1\leq i<j\leq m$.
	\end{proof}

From the fact that the Milnor $\bar{\mu}$ invariants vanish for shake slice links, we have that shake slice links are null-homologous. Building off of work of Taniyama and Yasuhara in \cite{ty02}, Martin classified band-pass equivalence.

\begin{theorem}[Corollary 5.2 in \cite{martin13}]
For links $L=L_1\sqcup...\sqcup L_m$ and $L'=L_1'\sqcup...\sqcup L_m'$ with vanishing pairwise linking numbers, $L$ and $L'$ are band-pass equivalent if and only if:
\[Arf(L_i)=Arf(L_i')\]
\[\bar{\mu}_L(ijk)=\bar{\mu}_{L'}(ijk)\]
\[\bar{\mu}_L(iijj)\equiv\bar{\mu}_{L'}(iijj)\pmod{2}\]
for all $i,j,k\in\{1,...,m\}$.
\end{theorem}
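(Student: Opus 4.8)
The plan is to reproduce Martin's argument \cite{martin13}, which itself rests on the local-move calculus of Taniyama--Yasuhara \cite{ty02}, proving the two directions of the biconditional separately. Band-pass equivalence is generated by single band-pass moves together with ambient isotopy, and in such a move each of the two bands is a pair of oppositely oriented parallel strands, hence carries zero net homology; so a band-pass move never changes any pairwise linking number, and in particular the standing hypothesis is preserved along any band-pass sequence and all the invariants in the statement remain defined throughout. For the \emph{necessity} direction it then suffices to check that a single band-pass move fixes each of $Arf(L_i)$, $\bar{\mu}_L(ijk)$, and $\bar{\mu}_L(iijj)\bmod 2$.

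I would verify necessity one invariant at a time. Restricting the ambient picture to a single component $L_i$, a band-pass move induces on $L_i$ either an ambient isotopy or a pass move in the sense of Kauffman; in either case $Arf(L_i)$ is unchanged, by Kauffman's characterization of pass equivalence. For the Milnor numbers one uses their geometric content in the presence of vanishing linking numbers: $\bar{\mu}_L(ijk)$ is an algebraic count of triple points among Seifert surfaces (a triple-linking number), which a clasp-type modification between oppositely oriented strand pairs leaves unchanged, whereas $\bar{\mu}_L(iijj)$ is a Sato--Levine-type self-linking that a band-pass move can alter only by an even integer. Collecting these local computations shows the listed data are band-pass invariants.

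For the \emph{sufficiency} direction I would bring every $m$-component link with vanishing pairwise linking numbers to a normal form depending only on the data $\big(Arf(L_i);\ \bar{\mu}_L(ijk)\text{ for }i<j<k;\ \bar{\mu}_L(iijj)\bmod 2\text{ for }i<j\big)$: take the $m$-component unlink, insert $\bar{\mu}_L(ijk)$ Borromean tangles on each triple $\{i,j,k\}$, insert a single Whitehead-type clasp between $L_i$ and $L_j$ whenever $\bar{\mu}_L(iijj)$ is odd, and tie $L_i$ into a trefoil whenever $Arf(L_i)=1$. The crux is the reduction lemma that any link can be carried to its normal form by band-pass moves; I would do this in stages --- first remove the inter-component clasping in excess of what the triple-linking numbers prescribe, then normalize the triple-linking data, then the Sato--Levine parities, then (by pass moves on each component, using that a knot with $Arf=1$ is pass-equivalent to the trefoil) the component Arf invariants --- with each stage arranged not to disturb the invariants already fixed. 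This ``independence'' of the stages is exactly the point at which the $C_k$-move and clasp-pass machinery of \cite{ty02} is needed, and it is the main obstacle; I would import those structural results as a black box and concentrate on the bookkeeping of the normal form. Since $L$ and $L'$ then share a normal form, they are manifestly band-pass equivalent. For the present paper only this direction is used: by the preceding theorem and the vanishing of the Milnor invariants, a shake slice link satisfies all the listed conditions with $L'$ the trivial link, and is therefore band-pass equivalent to the trivial link.
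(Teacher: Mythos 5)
The paper does not prove this statement; it is quoted verbatim as Corollary 5.2 of Martin's thesis \cite{martin13} and used as a black box (its only role here is to yield the immediately following corollary that a shake slice link is band-pass trivial). There is therefore no in-paper argument to compare against. Your reconstruction does match the general shape of Martin's argument: necessity by checking invariance of $Arf(L_i)$, $\bar{\mu}_L(ijk)$, and $\bar{\mu}_L(iijj)\bmod 2$ under a single band-pass move --- your observations that each band in a band-pass move carries zero net homology (so linking numbers are preserved), and that restricted to a single component the move is either an isotopy or a Kauffman pass move (so $Arf(L_i)$ is preserved), are correct and are the right starting points --- and sufficiency by reduction to a normal form built from Borromean-ring insertions, Whitehead clasps, and local trefoils on an unlink.

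That said, the sufficiency direction is where essentially all the content lies, and you yourself flag that you would import the Taniyama--Yasuhara and Martin reduction machinery wholesale. As written this is an outline rather than a proof; that is acceptable here only because the paper likewise treats the theorem as imported. If you wanted to actually establish necessity you would still need to supply the computation behind ``a band-pass move can alter $\bar{\mu}(iijj)$ only by an even integer'' --- that is the claim requiring proof, not a justification --- and behind the invariance of $\bar{\mu}_L(ijk)$. For sufficiency you would need to carry out the normal-form reduction in full, in particular verifying that the several stages of normalization (component Arf invariants, triple linking numbers, Sato--Levine parities) can each be realized by band-pass moves without disturbing the data already fixed; this independence is the crux and is exactly what the $C_k$-move calculus of \cite{ty02} and the arguments of \cite{martin13} are for. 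For the paper's purposes only the easy consequence (shake slice $\Rightarrow$ all listed data vanish $\Rightarrow$ band-pass trivial) is needed, and your last sentence correctly identifies this.
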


Thus we have the following:
\begin{corollary}
If $L$ is shake slice, then $L$ is band-pass equivalent to the trivial link.
\end{corollary}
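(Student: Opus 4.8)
The plan is to verify that a shake slice link $L = L_1 \sqcup \cdots \sqcup L_m$ satisfies exactly the hypotheses appearing in Martin's classification (the quoted Corollary 5.2 of \cite{martin13}) and that the trivial $m$-component link $U = U_1 \sqcup \cdots \sqcup U_m$ satisfies them with the same values, so that the theorem directly identifies $L$ and $U$ as band-pass equivalent.

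First I would check that we are in the regime where Martin's criterion applies: by Corollary \ref{cor:shake_disjoint} the pairwise linking numbers of $L$ vanish, and those of $U$ of course vanish as well. Next I would match the three families of invariants in the statement of the theorem. The component Arf invariants agree, since $Arf(L_i) = 0 = Arf(U_i)$ for every $i$ by the Theorem proved above. The triple linking numbers $\bar{\mu}_L(ijk)$ and the length-four invariants $\bar{\mu}_L(iijj)$ agree because \emph{all} Milnor invariants of $L$ vanish: $L$ is shake concordant to a trivial link, the first non-vanishing $\bar{\mu}$ invariant is a shake concordance invariant (Theorem 5.2 of \cite{bosman20}), and so there is no first non-vanishing invariant at all, whence $\bar{\mu}_L(I) = 0 = \bar{\mu}_U(I)$ for every multi-index $I$ with no indeterminacy. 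In particular $\bar{\mu}_L(ijk) = \bar{\mu}_U(ijk)$ and $\bar{\mu}_L(iijj) \equiv \bar{\mu}_U(iijj) \pmod 2$ for all $i,j,k$.

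With all three conditions verified, Martin's theorem gives that $L$ is band-pass equivalent to $U$, as claimed. There is no genuine obstacle here: every invariant on Martin's list has already been shown in the preceding results to vanish for shake slice links, so the corollary is simply an assembly of those facts; the only point requiring a moment's care is confirming that the vanishing of all $\bar{\mu}$ invariants removes the usual indeterminacy, so that the length-three and length-four Milnor numbers entering Martin's criterion are well defined and equal to zero.
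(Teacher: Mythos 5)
Your proposal matches the paper's intended argument exactly: the corollary follows by checking that the vanishing of the $\bar{\mu}$ invariants and of the component Arf invariants (all established just above) puts $L$ and the trivial link on the same side of Martin's classification. Your additional remark about the vanishing of all lower-order $\bar{\mu}$ invariants removing the indeterminacy in the length-three and length-four invariants is a worthwhile point of care, fully consistent with the paper's setup.
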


More broadly, we have that shake slice links behave like slice links on account of the following result.

\begin{proposition}
\label{homologically_slice}
Suppose the $m$-component link $L$ is shake slice. Then $L$ bounds $m$ disjoint disks in a homology 4-ball. That is, $L$ is homologically slice.
\end{proposition}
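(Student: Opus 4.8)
The plan is to build a homology $4$-ball bounding $L$ by combining the surgered $4$-manifold of Proposition \ref{homology_cobordant} with the $2$-handle cobordism from $S^3$ to $M_L$. Write $W_L=B^4\cup(\text{$0$-framed $2$-handles along }L)$, so that $\partial W_L=M_L$, $\pi_1(W_L)=1$, and $H_2(W_L)\cong\mathbb{Z}^m$ with generator $e_i$ represented by a Seifert surface for $L_i$ capped off by the core of the $i$-th handle. Because a shake slice link has vanishing pairwise linking numbers (Corollary \ref{cor:shake_disjoint}) and all framings are $0$, the intersection form of $W_L$ vanishes identically. Let $S_1,\dots,S_m\hookrightarrow W_L$ be the disjoint embedded spheres furnished by shake sliceness, with $[S_i]=e_i$; each has normal Euler number $e_i\cdot e_i=0$, so, exactly as in the proof of Proposition \ref{homology_cobordant}, we may surger all of them to obtain $\overline{W_L}$ with $\partial\overline{W_L}=M_L$ and
\[
H_n(\overline{W_L})\cong\begin{cases}\mathbb{Z}&n=0\\ \mathbb{Z}^m&n=1\\ 0&n\geq 2,\end{cases}
\]
the homology of a boundary connected sum of $m$ copies of $S^1\times D^3$.

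The crucial bookkeeping concerns the meridians. Let $c_i$ be the cocore of the $i$-th $2$-handle: it is a disk in $W_L$ with $\partial c_i=\mu_i\subset M_L$ a meridian of $L_i$, and $c_i\cdot S_j=c_i\cdot e_j=\delta_{ij}$. After the surgeries, $c_i$ becomes a planar surface in $\overline{W_L}$ whose boundary is $\mu_i$ together with copies of the core circles $\{\mathrm{pt}\}\times S^1\subset\partial(D^3\times S^1)$; equating the class of this boundary to zero in $H_1(\overline{W_L})$ and using $c_i\cdot S_j=\delta_{ij}$ shows that $\{[\mu_1],\dots,[\mu_m]\}$ is a basis of $H_1(\overline{W_L})\cong\mathbb{Z}^m$. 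Equivalently, the inclusion $M_L\hookrightarrow\overline{W_L}$ induces an isomorphism $H_1(M_L)\xrightarrow{\cong}H_1(\overline{W_L})$ sending meridians to meridians.

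Next I would attach the piece carrying the disks. Let $N=(S^3\times[0,1])\cup(\text{$0$-framed $2$-handles along }L\times\{1\})$ be the trace of the surgery from $S^3$ to $M_L$, so $\partial N=S^3\sqcup M_L$; then $N$ contains the $m$ disjoint properly embedded smooth disks $D_i=(L_i\times[0,1])\cup(\text{core of the $i$-th handle})$, with $\partial D_i=L_i\times\{0\}$. Set $B:=\overline{W_L}\cup_{M_L}N$, glued along the common boundary $M_L$. Then $\partial B=S^3$, and $L\subset\partial B$ bounds the disjoint smooth disks $D_i\subset N\subset B$. To finish, a Mayer--Vietoris argument over $M_L$ shows that $B$ is a homology $4$-ball: one inputs $H_*(N)\cong H_*(S^3\vee\bigvee^{m}S^2)$, the fact that $H_3(M_L)\to H_3(N)$ and $H_2(M_L)\to H_2(N)$ are isomorphisms (the first because $M_L$ and $S^3$ are the two boundary components of $N$, the second by comparing the evident generators), and the meridian isomorphism $H_1(M_L)\xrightarrow{\cong}H_1(\overline{W_L})$ from the previous step; together these force $\widetilde H_n(B)=0$ for every $n$. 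Hence $L$ bounds $m$ disjoint disks in the homology $4$-ball $B$.

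The step I expect to be the real obstacle is the meridian bookkeeping of the second paragraph: merely knowing (via Proposition \ref{homology_cobordant}) that $M_L$ is homology cobordant to $\#^m(S^1\times S^2)$ does not suffice; one needs the cap $\overline{W_L}$ to have $H_1$ generated by the meridians of $L$, as this is exactly what collapses the final Mayer--Vietoris sequence and kills $H_1(B)$. The remaining ingredients — triviality and disjointness of the spheres $S_i$, and the homology computations for $\overline{W_L}$ and $N$ — are routine and parallel the argument already given for Proposition \ref{homology_cobordant}.
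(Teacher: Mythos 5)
Your proof is correct, but it takes a genuinely different route from the paper's. The paper invokes Proposition~\ref{homology_cobordant} to produce a homology cobordism $W$ from $M_L$ to $M_{T_m}=\#^m S^1\times S^2$, caps the $T_m$ end with $\natural^m S^1\times D^3$ to obtain a ``cap'' $W'$ of $M_L$, and then attaches $0$-framed $2$-handles along the meridians $\mu_i\subset M_L$; the cocores of these new handles are the slice disks, and the fact that $M_L\hookrightarrow W'$ is an $H_1$-isomorphism (forced by the definition of homology cobordism) is what collapses the final Mayer--Vietoris computation. You instead work entirely inside the absolute handlebody $W_L=B^4\cup(\text{$2$-handles along }L)$, surger the shake-slice spheres there to build the cap $\overline{W_L}$ directly, and verify the crucial $H_1$-isomorphism by hand via the cocores $c_i$ and the intersection numbers $c_i\cdot S_j=\delta_{ij}$; your $N$ is the trace of the surgery $S^3\rightsquigarrow M_L$, which is just the paper's meridian $2$-handle attachment read upside down, so the two final gluings agree and the disks are the same. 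The paper's argument is shorter because it recycles Proposition~\ref{homology_cobordant}; yours is more self-contained and makes explicit exactly where the meridian bookkeeping enters, which the paper leaves implicit in the phrase ``the induced maps from inclusion give the desired isomorphisms'' in the proof of Proposition~\ref{homology_cobordant}. One small point worth noting: in the paper's proof, $\partial W''$ should be $S^3$ rather than $\emptyset$, exactly as your gluing $B=\overline{W_L}\cup_{M_L}N$ makes transparent.
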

\begin{proof}
	Consider an $m$-component shake slice link $L$. Then $L$ is shake concordant to the $m$-component trivial link $T_m$. Note the zero surgery manifold $M_{T_m}$ is diffeomorphic to $\#_{i=1}^m S^1\times S^2$. Hence by Proposition \ref{homology_cobordant}	the zero surgery manifold $M_L$ is homology cobordant to $\#_{i=1}^m S^1\times S^2$. Let $W$ denote the 4-manifold of the homology cobordism. We modify $W$ to obtain a homology 4-ball. First, cap off $\#_{i=1}^m S^1\times S^2$ with $\natural m\, S^1\times D^3$ to obtain a 4-manifold which we denote $W'$. Notice $\partial W'=M_L$ and
	\[H_n(W')\cong
 	\begin{cases}
		\mathbb{Z} & n=0\\
		\mathbb{Z}^m & n=1\\
		0 & n\geq 2
	\end{cases}.
	\]
	Attach a 0-framed 2-handle to $W'$ along each of the $m$ meridians of $L$, denote the resulting 4-manifold $W''$. Note $\partial W''=0$. This kills the first homology group of $W'$, so that $W''$ is a homology ball. To see this consider the Mayer-Vietoris exact sequence:
\[\cdots\rightarrow H_1(\#_{i=1}^m S^1\times D^2)\xrightarrow{(i_*,j_*)} H_1(W')\oplus H_1(D^2\times D^2)\]\[\xrightarrow{k_*-l_*} H_1(W'')\xrightarrow{\partial_*} H_0(\#_{i=1}^m S^1\times D^2)\rightarrow\cdots.\]
We observe $H_1(D^2\times D^2)=0$ and $i_*$ is an isomorphism since $H_1(\#_{i=1}^m S^1\times D^2)$ is generated by the meridians of $L$. Hence, $k_*-l_*=0$. Moreover, $\partial_*$ is the zero map since $W''$ is connected and hence $H_1(W'')=0$.
The co-core of each 2-handle is a disk bounded by a component of $L_i$ in $W''$, $i=1,..,m$. Note these disks are disjoint.
\end{proof}

In particular, each component $L_i$ of $L$ is slice in a homology 4-ball. Cha, Livingston, and Ruberman have shown in \cite[Theorem 3]{clr08} that it then follows that $L_i$ is algebraically slice. Hence we obtain:

\begin{corollary}
If $L=L_1\sqcup...\sqcup L_m$ is shake slice, then each $L_i$ is algebraically slice. In particular, the signatures and Arf invariant vanish for each $L_i$, $i=1,...,m$.
\end{corollary}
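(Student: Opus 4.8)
The plan is to bootstrap from Proposition~\ref{homology_cobordant}. Being shake slice, $L$ is shake concordant to the $m$-component trivial link $T_m$, whose zero surgery manifold $M_{T_m}$ is diffeomorphic to $\#_{i=1}^m(S^1\times S^2)$. Proposition~\ref{homology_cobordant} then supplies a homology cobordism $W$ with $\partial W = M_L\sqcup -\#_{i=1}^m(S^1\times S^2)$, and the goal is to surgically modify $W$ into a homology $4$-ball, with boundary $S^3$, inside which $L$ bounds $m$ disjoint disks. First I would cap off the $\#_{i=1}^m(S^1\times S^2)$ end with a copy of $\natural_{i=1}^m(S^1\times D^3)$; calling the result $W'$, we have $\partial W' = M_L$, and a Mayer--Vietoris computation (using that $\#_{i=1}^m(S^1\times S^2)\hookrightarrow W$ induces homology isomorphisms) shows that $W'$ has the homology of $\natural_{i=1}^m(S^1\times D^3)$: namely $H_0(W')\cong\mathbb{Z}$, $H_1(W')\cong\mathbb{Z}^m$ generated by the meridians $\mu_1,\dots,\mu_m$ of the components of $L$, and $H_n(W') = 0$ for $n\ge 2$.

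Next I would attach a $0$-framed $2$-handle to $W'$ along each $\mu_i\subset M_L = \partial W'$, forming a $4$-manifold $W''$, and check two things. First, $\partial W''\cong S^3$: presenting $M_L$ as zero surgery on $L$ and adjoining the $\mu_i$ as framed surgery curves, a slam-dunk of each $\mu_i$ into $L_i$ deletes $L_i$ and turns $\mu_i$ into an $\infty$-framed unknot, so the net surgery is trivial. Second, $W''$ is a homology $4$-ball: since each $\mu_i$ generates a $\mathbb{Z}$ summand of $H_1(W')\cong\mathbb{Z}^m$, the Mayer--Vietoris sequence for the decomposition $W'' = W'\cup(\text{2-handles})$ shows the handles kill $H_1$ and create no $H_2$, so that $H_*(W'')\cong H_*(B^4)$.

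Finally, the co-core of the $2$-handle attached along $\mu_i$ is a properly embedded disk $D_i\subset W''$ whose boundary is a meridian of $\mu_i$, which under the identification $\partial W''\cong S^3$ is a copy of $L_i$; the disks $D_1,\dots,D_m$ are disjoint because the handles are attached along disjoint curves. Hence $L$ bounds $m$ disjoint disks in the homology $4$-ball $W''$, so $L$ is homologically slice. I expect the main obstacle to be the combined verification that $\partial W''\cong S^3$ and that the $2$-handles kill all of $H_1$ while introducing no $H_2$; the latter rests on the meridians of $L$ generating $H_1(M_L)$, hence $H_1(W')$, which is built into the homology cobordism.
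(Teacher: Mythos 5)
Your argument carefully re-derives Proposition~\ref{homologically_slice}: you show that $L$ bounds $m$ disjoint disks in a homology $4$-ball $W''$, following essentially the paper's own construction (cap off the $\#\,S^1\times S^2$ end with $\natural_{i=1}^m S^1\times D^3$, attach $0$-framed $2$-handles along meridians of $L$, and run Mayer--Vietoris; your slam-dunk check that $\partial W''\cong S^3$ is a nice added detail the paper glosses over). That part is correct.

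But the corollary asks for more: it asserts each $L_i$ is \emph{algebraically} slice, whereas your proof stops at ``$L$ is homologically slice.'' Being slice in a homology $4$-ball is not by definition the same as algebraic sliceness (vanishing of a half-rank metabolizer for the Seifert form); one needs a theorem to bridge the two. The paper cites Cha--Livingston--Ruberman \cite[Theorem 3]{clr08} precisely for the implication that a knot slice in a homology $4$-ball is algebraically slice. You would need to either invoke that result or re-derive it (push a Seifert surface for $L_i$ slightly into the homology ball, cap it with the slice disk, and use $H_1=H_2=0$ of the ambient homology ball to produce a metabolizer, as in the classical argument for genuine slice knots). As written, the final step of the corollary---and hence the conclusion that signatures and Arf invariants of the components vanish---is not established.
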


%A link $L\subset S^3$ is called stably slice if there exists some $n\geq 0$ such that in $D^4\#nS^2\times S^2$ the components of $L$ bound disjoint, locally flat, nullhomologous disks.

%It follows from a result of Schneiderman [?] and Theorem ?? that a link is stably slice exactly when it is band pass equivalent to the trivial link. Hence, we have:
%\begin{propostion}
%Given a link $L\subset S^3$, if $L$ is shake slice, then it is stably slice.
%\end{propostion}

%Moreover, as it is known that stably slice links are exactly the set of 0-solvable links, as defined by Cochran-Orr-Teichner [?], we have that shake slice links are 0-solvable. 

\end{document}